\newtheorem{theorem}{Theorem}[section]
\newtheorem{corollary}[theorem]{Corollary}
\numberwithin{equation}{section}
\newcommand{\R}{\mathbb{R}}
\newcommand{\N}{\mathbb{N}}
\newcommand{\cF}{\mathcal{F}}
\newcommand{\cM}{\mathcal{M}}
\newcommand{\supp}{\textup{supp}}
\newcommand{\diam}{\textup{diam}}
\newcommand{\eps}{\varepsilon}
\begin{document}

\title[Distribution of point charges with small discrete energy]
{Distribution of point charges with small discrete energy}%
\author{Igor E. Pritsker}%

\thanks{Research was partially supported by the National Security
Agency, and by the Alexander von Humboldt Foundation.}

\address{Department of Mathematics, Oklahoma State University, Stillwater, OK 74078, U.S.A.}%
\email{igor@math.okstate.edu}

\subjclass[2010]{Primary 31C20; Secondary 31C15}%
\keywords{Riesz potentials, Newton potentials, equilibrium measure, discrete energy, Fekete points, minimum energy, discrepancy.}%



\begin{abstract}

We study the asymptotic equidistribution of points near arbitrary compact sets of positive capacity in $\R^d,\ d\ge 2$. Our main tools are the energy estimates for Riesz potentials. We also consider the quantitative aspects of this equidistribution in the classical Newtonian case. In particular, we quantify the weak convergence of discrete measures to the equilibrium measure, and give the estimates of convergence rates for discrete potentials to the equilibrium potential.

\end{abstract}

\maketitle


\section{Asymptotic equidistribution of discrete sets}

Let $E$ be a compact set in $\R^d,\ d\ge 2.$  Denote the Euclidean distance between $x\in\R^d$ and $y\in\R^d$ by $|x-y|$. We consider potential theory associated with Riesz kernels
\[
k_{\alpha}(x):=|x|^{\alpha-d},\quad x\in\R^d,\quad 0<\alpha<d.
\]
For a Borel measure $\mu$ with compact support, define its energy by
\[
I_{\alpha}[\mu]:= \iint k_{\alpha}(x-y) \, d \mu(x)d \mu(y).
\]
A central theme in potential theory is the study of the minimum energy problem
\[
W_{\alpha}(E):=\inf_{\mu\in\cM(E)} I_{\alpha}[\mu],
\]
where $\cM(E)$ is the space of all positive unit Borel measures supported on $E$. If Robin's constant $W_{\alpha}(E)$ is finite, then the above infimum is attained by the equilibrium measure $\mu_E\in\cM(E)$ \cite[p. 131--133]{La}, which is a unique probability measure expressing the steady state distribution of charge on the conductor $E$. The capacity of $E$ is defined by
\[
C_{\alpha}(E):=\frac{1}{W_{\alpha}(E)},
\]
where we set $C_{\alpha}(E)=0$ when $W_{\alpha}(E)$ is infinite. For a more detailed exposition of Riesz potential theory, we refer the reader to the book of Landkof \cite{La}.

The main goal of this paper is a study of discrete approximations to the equilibrium measure. Consider the counting measure $\tau(X_n)$ for a discrete set $X_n=\{x_{k,n}\}_{k=1}^n\subset\R^d$, given by
\[
\tau(X_n) := \frac{1}{n} \sum_{k=1}^n \delta_{x_{k,n}},
\]
where $\delta_{x_{k,n}}$ is the unit point mass at $x_{k,n}\in X_n$.
We define the {\em discrete} energy of $\tau(X_n)$ (or of the set $X_n$) by setting
\[
\hat{I}_{\alpha}[\tau(X_n)] := \frac{2}{n(n-1)} \sum_{1\le j<k\le n} k_{\alpha}(x_{j,n}-x_{k,n}).
\]
A set of points $\cF_n\subset E$ that minimizes the above energy among all $n$-tuples from $E$ is called the $n$-th Fekete points of $E$. The Fekete-Szeg\H{o} results on the transfinite diameter suggest that
\[
\lim_{n\to\infty} \inf_{X_n\subset E} \hat{I}_{\alpha}[\tau(X_n)] = \lim_{n\to\infty} \hat{I}_{\alpha}[\tau(\cF_n)] =  W_{\alpha}(E) = I_{\alpha}[\mu_E],
\]
which simply indicates that the discrete approximations of the minimum energy converge to Robin's constant. Furthermore, the counting measures $\tau(\cF_n)$ converge weakly to $\mu_E$  (written $\tau(\cF_n) \stackrel{*}{\rightarrow} \mu_E$) as $n\to\infty$,
provided that $W_{\alpha}(E)$ is finite, cf. \cite[pp. 160-162]{La}. Such equidistribution property holds for many sequences of discrete sets whose energies converge to Robin's constant, which gives rise to numerous possibilities of how one may discretize the equilibrium measure. These ideas originated in the work of Fekete \cite{Fe} and Szeg\H{o} \cite{Sz} for logarithmic potentials in the plane. One can find an extensive discussion of related questions in Andrievskii and Blatt \cite{AB}, including  history and references. The study of discrete Riesz potentials gained momentum more recently, and the area remains quite popular, see the surveys by Korevaar \cite{Ko}, Saff and Kuijlaars \cite{SK}, and Hardin and Saff \cite{HS}. We proved certain general qualitative and quantitative results for the discrete approximations of equilibrium measures in the plane \cite{Pr}. In the present paper, we extend the energy methods used in \cite{Pr} to Riesz potentials.

Define the Riesz potential of a Borel measure $\mu$ with compact support in $\R^d$ by
\[
U_{\alpha}^{\mu}(x) := \int k_{\alpha}(x-y)\,d\mu(y),\quad x\in\R^d.
\]
Note that $U_{\alpha}^{\mu}$ is a superharmonic function in $\R^d$ for $2\le\alpha<d$, and $U_{\alpha}^{\mu}$ is subharmonic in $\R^d\setminus\supp(\mu)$ for $0<\alpha\le 2$. Thus in the classical case $\alpha=2$, the Newtonian potential $U_2^{\mu}$ is harmonic in $\R^d\setminus\supp(\mu)$. If $W_{\alpha}(E)<\infty$ then the equilibrium measure $\mu_E$ exists, and we define the function $g_E(x)$ by
\begin{align*}
g_E(x) = W_{\alpha}(E) - U_{\alpha}^{\mu_E}(x),\quad x\in\R^d.
\end{align*}
When $0<\alpha\le 2$, $g_E$ is a nonnegative upper semi-continuous  function in $\R^d\cup\{\infty\}$, which is superharmonic on $E^c:=(\R^d\cup\{\infty\})\setminus E$, see \cite[p. 137]{La}. In the Newtonian case $\alpha=2$, $g_E$ coincides with the classical Green function for the unbounded component of $E^c.$ We use the quantity
\[
m_E(X_n) := \frac{1}{n} \sum_{x_{k,n}\in E^c} g_E(x_{k,n})
\]
to measure how close $X_n$ is to $E$. If $X_n\subset E$ then we set $m_E(X_n)=0$ by definition.

\begin{theorem} \label{thm1.1}
Let $0<\alpha\le 2$, and let $E\subset\R^d$ be a compact set with finite Robin's constant $W_{\alpha}(E)$. If the sets $X_n=\{x_{k,n}\}_{k=1}^n\subset\R^d,\ n\ge 2,$ satisfy
\begin{align} \label{1.1}
\lim_{n\to\infty} \hat{I}_{\alpha}[\tau(X_n)] = W_{\alpha}(E)
\end{align}
and
\begin{align} \label{1.2}
\lim_{n\to\infty} m_E(X_n) = 0,
\end{align}
then
\begin{align} \label{1.3}
\tau(X_n) \stackrel{*}{\rightarrow} \mu_E \mbox{ as }n\to\infty.
\end{align}
Conversely, \eqref{1.2} holds for any sequence of the sets $X_n=\{x_{k,n}\}_{k=1}^n\subset\R^d,\ n\in\N,$ satisfying \eqref{1.3}.
\end{theorem}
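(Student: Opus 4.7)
The converse direction is the easier of the two. The function $g_E$ is nonnegative, upper semicontinuous on $\R^d$, and bounded by $W_\alpha(E)$ (since $U_\alpha^{\mu_E} \ge 0$). Rewriting
\[
\int g_E\,d\tau(X_n) = m_E(X_n) + \frac{1}{n}\sum_{x_{k,n}\in E} g_E(x_{k,n})
\]
and noting that the final sum is nonnegative gives $m_E(X_n) \le \int g_E\,d\tau(X_n)$. Because $\mu_E$ has finite energy it does not charge polar sets, so $\int g_E\,d\mu_E = W_\alpha(E) - I_\alpha[\mu_E] = 0$. Portmanteau's theorem for bounded upper semicontinuous functions applied to \eqref{1.3} then yields $\limsup_n m_E(X_n) \le 0$, and the nonnegativity of $m_E$ concludes the proof.

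For the forward direction I would argue by weak-$*$ compactness. Being upper semicontinuous on $\R^d\cup\{\infty\}$ by hypothesis and lower semicontinuous on $E^c$ as a superharmonic function, $g_E$ is continuous on the open set $E^c$; moreover $g_E(\infty) = W_\alpha(E) > 0$, so there exists $R > 0$ with $g_E \ge W_\alpha(E)/2$ on $\R^d\setminus B_R$, and \eqref{1.2} forces $\tau(X_n)(\R^d\setminus B_R) \to 0$. The family $\{\tau(X_n)\}$ is therefore tight, and I extract a weak-$*$ convergent subsequence $\tau(X_{n_j}) \stackrel{*}{\rightarrow} \mu$ for a probability measure $\mu$. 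For the truncated kernel $k^M(x) := \min(k_\alpha(x),M)$, which is bounded and continuous, the elementary identity
\[
\iint k^M(x-y)\,d\tau(X_n)(x)\,d\tau(X_n)(y) \le \frac{n-1}{n}\hat I_\alpha[\tau(X_n)] + \frac{M}{n}
\]
together with \eqref{1.1} and the weak convergence of the product measures yields $\iint k^M(x-y)\,d\mu(x)\,d\mu(y) \le W_\alpha(E)$; letting $M\to\infty$ by monotone convergence gives $I_\alpha[\mu] \le W_\alpha(E)$, so $\mu$ has finite energy.

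The crucial step is to prove $\int U_\alpha^{\mu_E}\,d\mu = W_\alpha(E)$, equivalently $\int g_E\,d\mu = 0$. By continuity of $g_E$ on $E^c$, any compact $K\subset E^c\cap\{g_E>0\}$ admits $c_K>0$ with $g_E \ge c_K$ on $K$, whence $\tau(X_n)(K) \le m_E(X_n)/c_K \to 0$ by \eqref{1.2}. Exhausting $E^c\cap\{g_E>0\}$ by open sets whose closures are such compact $K$'s and applying the Portmanteau inequality $\mu(G) \le \liminf_j \tau(X_{n_j})(G)$ for open $G$ yields $\mu(E^c\cap\{g_E>0\}) = 0$; since the remaining set $E\cap\{g_E>0\}$ is polar, the finite-energy measure $\mu$ does not charge it, and hence $\int g_E\,d\mu = 0$. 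Positive definiteness of the Riesz energy on signed Borel measures of total mass zero \cite{La} now gives
\[
0 \le I_\alpha[\mu - \mu_E] = I_\alpha[\mu] - 2\int U_\alpha^{\mu_E}\,d\mu + I_\alpha[\mu_E] \le W_\alpha(E) - 2W_\alpha(E) + W_\alpha(E) = 0,
\]
forcing $\mu = \mu_E$. Since every weak-$*$ subsequential limit equals $\mu_E$, \eqref{1.3} follows.

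The main obstacle is the third step. On bounded components of $E^c$, $g_E$ vanishes when $\alpha = 2$ (the potential $U_\alpha^{\mu_E}$ being harmonic there, with boundary value $W_\alpha(E)$) but may be strictly positive when $\alpha < 2$, and $g_E$ can also fail to vanish on an irregular polar subset of $E$; one cannot therefore simply assert $\supp \mu \subset E$. The plan above sidesteps this by working uniformly with the superlevel set $\{g_E > 0\}$, using continuity of $g_E$ on the open set $E^c$ (rather than global continuity, which may fail) together with the polar-null property of measures of finite energy.
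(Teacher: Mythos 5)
Your proof is correct, and its overall skeleton (tightness of $\{\tau(X_n)\}$ from \eqref{1.2} and the positivity of $g_E$ near infinity; extraction of a weak-$*$ limit $\mu$; the truncated-kernel estimate giving $I_\alpha[\mu]\le W_\alpha(E)$ from \eqref{1.1}; and, for the converse, the upper-semicontinuity inequality for $\int g_E\,d\tau(X_n)$) coincides with the paper's. Where you genuinely diverge is in the final identification of $\mu$. The paper first locates the support of the limit measure --- inside $E$ when $0<\alpha<2$ (using $g_E>0$ on all of $E^c$) and inside $\hat E=\R^d\setminus\Omega_E$ when $\alpha=2$ --- and then invokes uniqueness of the energy minimizer over probability measures on that set, which for $\alpha=2$ requires the auxiliary facts $W_2(\hat E)=W_2(E)$ and $\mu_{\hat E}=\mu_E$. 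You instead prove $\int g_E\,d\mu=0$ directly, by showing $\mu$ gives no mass to the open set $E^c\cap\{g_E>0\}$ (via \eqref{1.2} and continuity of $g_E$ there) nor to the zero-capacity set $E\cap\{g_E>0\}$ (since $I_\alpha[\mu]<\infty$), and then conclude $\mu=\mu_E$ from $I_\alpha[\mu-\mu_E]\le 0$ and strict positive definiteness of the Riesz energy on signed measures of total mass zero. Your route handles $\alpha=2$ and $0<\alpha<2$ uniformly and sidesteps the support analysis --- including the delicate point about bounded components of $E^c$ where $g_E$ vanishes for $\alpha=2$ --- at the cost of invoking the energy principle for signed measures and the fact that finite-energy measures do not charge sets of zero capacity; the paper's route needs neither of these explicitly but relies on the structure theory of $\hat E$ and the uniqueness of the constrained minimizer. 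Both are complete; yours is arguably the cleaner uniform argument, while the paper's stays closer to the classical Fekete-point template.
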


When $X_n\subset E,$ we clearly have that $m_E(X_n)=0$ for all $n\ge 2,$ and \eqref{1.1} implies the well known fact that $\tau(X_n) \stackrel{*}{\rightarrow} \mu_E \mbox{ as }n\to\infty,$ see \cite[pp. 161-162]{La}. A new feature of the above result is that $X_n$ is not required to be a subset of $E$, allowing discretization schemes with point charges located outside $E$.

We remark that Theorem \ref{thm1.1} is valid in a more general setting, where the Riesz kernels $k_\alpha$ are replaced by kernels of the form $K(r)=H(-\log{r})$ when $d=2$, and $K(r)=H(r^{2-d})$ when $d\ge 3$.
One should assume that $H:\R\to[0,\infty)$ is a continuous increasing strictly convex function, and use a version of potential theory developed in Carleson \cite{Ca}, and Aikawa and Ess\'en \cite{AE}. The proof of this more general result closely follows our proof for Riesz kernels with $0<\alpha<2$.

\section{Rate of convergence and discrepancy in equidistribution}

Theorem \ref{thm1.1} describes conditions guaranteeing that the counting measures $\tau(X_n)$ converge to the equilibrium measure $\mu_E$ as $n\to\infty.$ This section is devoted to the estimates of rates in this convergence. The estimates of how close $\tau(X_n)$ is to the equilibrium measure $\mu_E$ are often called discrepancy estimates. We shall only consider the Newtonian case $\alpha=2$ in $\R^d,\ d\ge 3$. Logarithmic potentials in the plane were studied in \cite{Pr}, and we generalize the ideas of \cite{Pr} here. We shall suppress the subscript $\alpha=2$ in the notation, and write $U^{\mu}(x):=U_2^{\mu}(x)$, $I[\mu]:=I_2[\mu]$, $W(E):=W_2(E),$ etc. Consider a class of continuous functions $\phi:\R^d\to\R$ with compact support in $\R^d,\ d\ge 3.$ Since $\tau(X_n) \stackrel{*}{\rightarrow} \mu_E \mbox{ as }n\to\infty$ means
\[
\lim_{n\to\infty} \frac{1}{n} \sum_{k=1}^n \phi(x_{k,n}) = \lim_{n\to\infty} \int\phi\,d\tau(X_n) = \int\phi\,d\mu_E,
\]
it is most natural to seek the quantitative estimates of convergence $\tau(X_n) \stackrel{*}{\rightarrow} \mu_E$ in terms of convergence rates of the above $\phi$-means to $\int\phi\,d\mu_E.$ One may view this approach as a study of approximate quadrature rules for $\int\phi\,d\mu_E.$ Let
\[
\omega(\phi;r):=\sup_{|x-y|\le r} |\phi(x)-\phi(y)|
\]
be the modulus of continuity of $\phi$ in $\R^d$. We also require that functions $\phi$ have finite Dirichlet integral
\[
D[\phi]:= \int_{\R^d} |\nabla\phi|^2\,dV = \int_{\R^d} \sum_{i=1}^d \left(\frac{\partial\phi}{\partial x_i}\right)^2\,dV(x),\quad x=(x_1,\ldots,x_d)\in\R^d,
\]
where it is assumed that the partial derivatives of $\phi$ exist a.e. on $\R^d$ in the sense of $d$-dimensional Lebesgue measure $dV(x)$. We denote the surface area of the unit $(d-1)$-dimensional sphere in $\R^d$ by $\omega_d:=2\pi^{d/2}/\Gamma(d/2).$ Define the distance from a point $x\in\R^d$ to a compact set $E$ by
\[
d_E(x):=\min_{t\in E} |x-t|.
\]

\begin{theorem} \label{thm2.1}
Let $E\subset\R^d,\ d\ge 3,$ be a compact set with $W(E)<\infty$, and let $\phi:\R^d\to\R$ be a continuous function with compact support such that $D[\phi]<\infty.$ If $X_n=\{x_{k,n}\}_{k=1}^n\subset\R^d,\ n\ge 2,$ then we have for any $r>0$ that
\begin{align} \label{2.1}
\left|\frac{1}{n} \sum_{k=1}^n \phi(x_{k,n}) - \int\phi\,d\mu_E\right| \le \omega(\phi;r) + \sqrt{\frac{D[\phi]}{(d-2)\omega_d}}\, \sqrt{I},
\end{align}
where
\begin{align} \label{2.2}
I = 2m_E(X_n) + \frac{n-1}{n} \hat{I}[\tau(X_n)] - W(E) + \frac{r^{2-d}}{n} + 2\max_{d_E(x)\le 2r} g_E(x).
\end{align}
\end{theorem}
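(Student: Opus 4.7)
The plan is to regularize the empirical measure $\tau(X_n)$ to one of finite energy by smearing each point mass over a small sphere, control the error introduced by the regularization via the modulus of continuity of $\phi$, and then estimate the remaining discrepancy through a Dirichlet-integral Cauchy--Schwarz inequality combined with an explicit energy bound. Concretely, for each $k$ let $\nu_k$ be the uniform probability measure on the sphere $S(x_{k,n},r)$, and set $\tau_r:=\frac{1}{n}\sum_{k=1}^n\nu_k$. Splitting
$$\int\phi\,d\bigl(\tau(X_n)-\mu_E\bigr) = \int\phi\,d\bigl(\tau(X_n)-\tau_r\bigr) + \int\phi\,d\bigl(\tau_r-\mu_E\bigr),$$
the first summand is bounded in absolute value by $\omega(\phi;r)$, since $\nu_k$ is supported on a set of diameter $2r$ centered at $x_{k,n}$.

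For the second summand I would use the distributional identity $-\Delta U^\sigma=(d-2)\omega_d\,\sigma$ and integrate by parts (first for smooth compactly supported approximations of $\phi$, then passing to the limit while preserving $D[\phi]$) to write
$$\int\phi\,d\bigl(\tau_r-\mu_E\bigr) = \frac{1}{(d-2)\omega_d}\int_{\R^d}\nabla\phi\cdot\nabla\bigl(U^{\tau_r}-U^{\mu_E}\bigr)\,dV.$$
Cauchy--Schwarz together with the classical energy identity $\int_{\R^d}|\nabla U^\sigma|^2\,dV = (d-2)\omega_d\,I[\sigma]$ (valid for any $\sigma$ of finite energy, which both $\tau_r$ and $\mu_E$ are) then give
$$\left|\int\phi\,d\bigl(\tau_r-\mu_E\bigr)\right| \le \sqrt{\frac{D[\phi]}{(d-2)\omega_d}}\,\sqrt{I[\tau_r-\mu_E]}.$$

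It then remains to bound $I[\tau_r-\mu_E] = I[\tau_r] - 2\int U^{\mu_E}\,d\tau_r + W(E)$ by the quantity $I$ of \eqref{2.2}. By Newton's theorem, $U^{\nu_j}(x) = \min\bigl(r^{2-d},|x-x_{j,n}|^{2-d}\bigr)$, whence the self-energies are $I[\nu_j,\nu_j] = r^{2-d}$; since $y\mapsto|y-x_{j,n}|^{2-d}$ is superharmonic on $\R^d$, the spherical mean value inequality gives $I[\nu_j,\nu_k] \le |x_{j,n}-x_{k,n}|^{2-d}$ for $j\ne k$, without any separation hypothesis on the points. Summing yields $I[\tau_r] \le \frac{r^{2-d}}{n} + \frac{n-1}{n}\hat{I}[\tau(X_n)]$. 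Writing $U^{\mu_E} = W(E)-g_E$, the remaining contribution becomes $\frac{2}{n}\sum_k\int g_E\,d\nu_k - W(E)$, and I would split the indices by the location of $x_{k,n}$: if $d_E(x_{k,n})>r$ then $B(x_{k,n},r)\subset E^c$, so the superharmonicity of $g_E$ on $E^c$ gives $\int g_E\,d\nu_k\le g_E(x_{k,n})$, whose sum over these indices is at most $m_E(X_n)$; if $d_E(x_{k,n})\le r$ then every point of $S(x_{k,n},r)$ lies within distance $2r$ of $E$, so $\int g_E\,d\nu_k\le\max_{d_E(x)\le 2r}g_E(x)$. Assembling these inequalities proves \eqref{2.1}.

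The principal technical point I expect to be delicate is the rigorous justification of the Dirichlet-integral representation for $\phi$ that is only assumed continuous and compactly supported with a.e.\ defined partial derivatives of finite $L^2$ norm; a standard mollification argument, preserving $\|\phi\|_\infty$ (hence $\omega(\phi;r)$) and $D[\phi]$ up to arbitrarily small error, should suffice. A secondary subtlety is that the regularizing spheres $S(x_{k,n},r)$ may overlap arbitrarily, but Newton's pointwise bound $U^{\nu_j}\le|\cdot-x_{j,n}|^{2-d}$ yields the required cross-energy inequality uniformly, independently of the geometric configuration of the $x_{k,n}$.
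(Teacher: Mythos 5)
Your proposal is correct and follows the same main line as the paper: the same spherical regularization $\tau_n^r=\frac1n\sum_k\nu_k^r$, the same $\omega(\phi;r)$ bound for $\tau(X_n)-\tau_n^r$, the same Cauchy--Schwarz step $\bigl|\int\phi\,d\lambda\bigr|\le\sqrt{D[\phi]/((d-2)\omega_d)}\,\sqrt{I[\lambda]}$, and the same expansion of $I[\tau_n^r-\mu_E]$ using $U^{\nu_k^r}(x)=(\max(r,|x-x_{k,n}|))^{2-d}$ and the splitting of indices by $d_E(x_{k,n})\le r$ versus $>r$ (the paper uses the exact mean value property of the harmonic $g_E$ where you invoke the superharmonic mean value inequality; both are valid and give the same bound by $m_E(X_n)$ and $\max_{d_E(x)\le 2r}g_E(x)$). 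The one genuine divergence is how you justify the Dirichlet-form representation $\int\phi\,d\lambda=\frac{1}{(d-2)\omega_d}\int\nabla\phi\cdot\nabla U^{\lambda}\,dV$: the paper first assumes $E$ is regular with piecewise smooth boundary so that the classical jump formula and Green's identity apply on each component of $B_R\setminus\supp(\lambda)$, and then spends the second half of the proof approximating a general $E$ from outside by finite unions of closed balls $E_m$ and passing to the limit in \eqref{2.1} via $\mu_{E_m}\stackrel{*}{\rightarrow}\mu_E$, $W(E_m)\to W(E)$, and the monotonicity of the Green functions. You instead invoke the distributional identity $-\Delta U^{\sigma}=(d-2)\omega_d\,\sigma$ together with the energy identity $\int|\nabla U^{\sigma}|^2\,dV=(d-2)\omega_d\,I[\sigma]$ for finite-energy signed measures (Landkof, Theorem 1.20, which the paper itself cites), which makes the result hold for arbitrary compact $E$ of finite Robin constant in one stroke and eliminates the approximation argument entirely; the price is that you must verify that both $\tau_n^r$ and $\mu_E$ have finite energy (they do) and carry out the mollification of $\phi$ carefully. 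On that last point you correctly identify the only real delicacy: the hypothesis that the partials of $\phi$ merely exist a.e.\ with finite $L^2$ norm is not by itself enough to pass the mollified identity to the limit (one needs $\phi$ in the Sobolev class $W^{1,2}$); the paper's proof has the same implicit requirement, and in the intended applications (Theorem \ref{thm2.2}) $\phi$ is Lipschitz, so this causes no difficulty.
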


Energy ideas have been used in discrepancy estimates by Kleiner \cite{Kl}, Sj\"{o}gren \cite{Sj1}-\cite{Sj2}, Huesing \cite{Hus} and G\"otz \cite{Go1}-\cite{Go3}, see \cite[Ch. 5]{AB}. A typical application of our result is given by a sequence of sets $X_n$ satisfying \eqref{1.1} and \eqref{1.2}. If we choose $r=r_n\to 0$ as $n\to\infty$, then the right hand side of \eqref{2.1} tends to 0 under the assumption that the Green function $g_E(x)$ is continuous at the boundary points of $\Omega_E$ (i.e. $E$ is regular). In order to obtain polynomial rates of convergence, one should set $r_n=c/n^a,$ with $a,c>0,$ and consider sets with uniformly H\"older continuous Green functions. The condition of the uniform H\"older continuity for $g_E(x)$ means that
\begin{align} \label{2.3}
g_E(x) \le A(E) (d_E(x))^s, \quad x\in\Omega_E,
\end{align}
where $A(E)>0$ and $0<s\le 1$ are independent of $x\in\Omega_E$. Note that the set $E$ need not be smooth for \eqref{2.3} to hold. In fact, \eqref{2.3} is satisfied for quite general classes of sets. The problem of H\"older continuity of Green functions was studied by Carleson and Totik \cite{CT}, Maz'ja \cite{Ma1}-\cite{Ma2}, To\'okos \cite{Too}, and Totik \cite{To}.

It is clear that various choices of $\phi$ lead to diverse applications of Theorem \ref{thm2.1}. We consider an application of Theorem \ref{thm2.1} to the potentials of ``near-Fekete" points, i.e., to the potentials of sets $X_n\subset E$ whose discrete energies are close to $W(E)$. Selecting $\phi$ as a modification of the kernel $k_2(x-y)$, we show that the potentials of discrete measures $\tau(X_n)$ are close to the equilibrium potential.

\begin{theorem} \label{thm2.2}
Let $E\subset\R^d,\ d\ge 3,$ be a compact set with $W(E)<\infty,$ such that the H\"older condition \eqref{2.3} holds with an exponent $s\in(0,1]$. Suppose that  $X_n=\{x_{k,n}\}_{k=1}^n\subset E,\ n\ge 2,$ satisfy
\begin{align} \label{2.4}
\hat{I}[\tau(X_n)] - W(E) \le C_1\,n^{-p}, \quad n\ge 2,
\end{align}
where $p:=s/(d+s-2)$ and $C_1>0$ is independent of $X_n$. Then we have for any $y\in E^c$ and any $n\ge 2$ that
\begin{align} \label{2.5}
\left|U^{\mu_E}(y) - U^{\tau(X_n)}(y)\right| \le C_2\left((d_E(y))^{1-d} n^{-p/s} + (d_E(y))^{1-d/2} n^{-p/2} \right),
\end{align}
where $C_2>0$ is independent of $y$ and $X_n$. Furthermore, there exists $q=q(d,s)>0$ such that
\begin{align} \label{2.6}
\sup_{y\in \R^d} \left(U^{\mu_E}(y) - U^{\tau(X_n)}(y)\right) \le C_3 n^{-q}, \quad n\ge 2,
\end{align}
where $C_3>0$ is independent of $X_n$.
\end{theorem}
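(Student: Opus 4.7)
The strategy is to derive \eqref{2.5} from Theorem 2.1 by choosing $\phi$ to be a smooth truncation of the Newton kernel centered at $y$, and then to bootstrap \eqref{2.5} to \eqref{2.6} by combining it with the maximum principle on $E^c$.

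For \eqref{2.5}, fix $y \in E^c$, set $\delta := d_E(y)/2$, and choose $R$ large enough that $E \subset B(y, R/2)$. Define $\phi_{y,\delta}(x)$ to equal $\delta^{2-d}$ on $|x-y| \le \delta$, to equal $|x-y|^{2-d}$ on $\delta \le |x-y| \le R$, and to decay smoothly to $0$ on $R \le |x-y| \le 2R$. Since $X_n \cup \supp(\mu_E) \subset E$ and every point of $E$ lies at distance at least $2\delta$ from $y$, we have $\frac{1}{n}\sum_k \phi(x_{k,n}) = U^{\tau(X_n)}(y)$ and $\int \phi\,d\mu_E = U^{\mu_E}(y)$ exactly. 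Using $|\nabla k_2(\cdot - y)| = (d-2)|\cdot - y|^{1-d}$, a direct computation gives $\omega(\phi;r) \le (d-2)\delta^{1-d} r$ (for $r \le \delta$), and integration in spherical shells shows $D[\phi] \le (d-2)\omega_d \delta^{2-d} + O(1)$, so $\sqrt{D[\phi]/((d-2)\omega_d)} \lesssim \delta^{1-d/2}$.

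Now apply Theorem 2.1. Since $X_n \subset E$, $m_E(X_n) = 0$, and the H\"older condition \eqref{2.3} yields $\max_{d_E(x) \le 2r} g_E(x) \le A(E)(2r)^s$. Combined with \eqref{2.4}, this gives
\[
I \le C_1 n^{-p} + \frac{r^{2-d}}{n} + 2 A(E)(2r)^s.
\]
Balancing $r^{2-d}/n$ against $r^s$ forces $r = c\, n^{-1/(d+s-2)}$, which makes all three terms $O(n^{-p})$ (recall $p = s/(d+s-2)$). Substituting into \eqref{2.1}:
\[
|U^{\mu_E}(y) - U^{\tau(X_n)}(y)| \lesssim \delta^{1-d} r + \delta^{1-d/2} n^{-p/2} = \delta^{1-d} n^{-p/s} + \delta^{1-d/2} n^{-p/2},
\]
which, with $\delta = d_E(y)/2$, is \eqref{2.5}.

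For \eqref{2.6} the plan is to split over $y$ by the value of $d_E(y)$. For $y$ with $d_E(y) \ge n^{-\beta}$ (with $\beta = \beta(d,s)$ to be chosen), \eqref{2.5} directly delivers an upper bound of order $n^{-q'}$ for some $q' > 0$. For $y \in E$ or $y$ with $d_E(y) < n^{-\beta}$, the key observation is that $X_n \subset E$ makes $V := U^{\mu_E} - U^{\tau(X_n)}$ harmonic on $\R^d \setminus E$ with $V(\infty) = 0$; hence by the maximum principle, $\sup_{\R^d \setminus E} V$ is controlled by $\limsup V$ taken at $\partial E$ from outside. It remains to bound $V$ from above on $E$ itself, where one writes $V(y) \le W(E) - U^{\tau(X_n)}(y)$ (using $U^{\mu_E} \le W(E)$) and invokes the classical Fekete-type comparison $U^{\tau(X_n)}(y) \ge \tfrac{n+1}{2}\hat{I}[\tau(\cF_{n+1})] - \tfrac{n-1}{2}\hat{I}[\tau(X_n)]$, valid for $y \in E \setminus X_n$ by enlarging $X_n$ to an $(n+1)$-point configuration. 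Combining this with a rate for $W(E) - \hat{I}[\tau(\cF_{n+1})]$ (itself produced by discretizing $\mu_E$ and using \eqref{2.3}), the main obstacle is to balance the various exponents consistently to extract a single $q = q(d,s) > 0$ that works uniformly in $y$.
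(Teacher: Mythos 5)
Your derivation of \eqref{2.5} is essentially the paper's argument: you apply Theorem \ref{thm2.1} to a Lipschitz truncation of the Newton kernel centered at $y$ (the paper uses $\phi(x)=\max((|y-x|+d_E(x))^{2-d}-R^{2-d},0)$ rather than a flat cap on $B(y,\delta)$, but both give $\omega(\phi;r)=O(d_E(y)^{1-d}r)$ and $D[\phi]=O(d_E(y)^{2-d})$), you balance $r^{2-d}/n$ against $(2r)^s$ at $r=n^{-p/s}$, and you use $m_E(X_n)=0$. That part is sound.

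Your route to \eqref{2.6}, however, has a genuine gap at the step where you bound $U^{\tau(X_n)}$ from below on $E$ itself. The Fekete-type comparison
$U^{\tau(X_n)}(y) \ge \tfrac{n+1}{2}\hat{I}[\tau(\cF_{n+1})] - \tfrac{n-1}{2}\hat{I}[\tau(X_n)]$
can be rewritten as $U^{\tau(X_n)}(y)\ge W(E)-\tfrac{n+1}{2}\bigl(W(E)-\hat I[\tau(\cF_{n+1})]\bigr)-\tfrac{n-1}{2}\bigl(\hat I[\tau(X_n)]-W(E)\bigr)$: the energy errors are amplified by a factor of order $n$. Under hypothesis \eqref{2.4} the last term can be as large as $\tfrac{C_1}{2}(n-1)n^{-p}$, which diverges since $p=s/(d+s-2)<1$ for $d\ge 3$; and one would additionally need $W(E)-\hat I[\tau(\cF_{n+1})]=o(1/n)$, which is not available (and not implied by \eqref{2.3}). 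So no choice of exponents rescues this branch --- the "obstacle" you flag at the end is not a bookkeeping issue but a dead end. The paper avoids estimating $U^{\tau(X_n)}$ on $E$ by any energy comparison: it takes the level set $\Gamma=\{x\in\Omega_E: g_E(x)=n^{-q}\}$, notes that \eqref{2.3} forces $d_E(y)\ge (n^{-q}/A(E))^{1/s}$ on $\Gamma$ so that \eqref{2.5} applies there, chooses $q$ small enough that the resulting bound is $O(n^{-q})$, and then uses the minimum principle for the superharmonic function $U^{\tau(X_n)}$ on $G=\{g_E<n^{-q}\}\supset E$ to transport the lower bound $U^{\tau(X_n)}\ge W(E)-O(n^{-q})$ from $\Gamma$ down to $E$; finally the Principle of Domination (applied to the one-sided inequality on $\supp(\mu_E)$) extends it to all of $\R^d$. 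You would need to replace your split on $d_E(y)$ and the Fekete comparison by this level-set/minimum-principle argument to complete the proof.
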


The Fekete points $\cF_n=\{\zeta_{k,n}\}_{k=1}^n$ often represent the most natural way to discretize the equilibrium measure. However, they are difficult to find explicitly and even numerically, as all points of $\cF_n$ change with $n$. A more convenient choice of discretization frequently used in practice is given by Leja points, which are defined as a sequence. If $E\subset\R^d$ is a compact set of positive capacity, then the Leja (or Leja-G\'orski) points $\{\xi_k\}_{k=0}^{\infty}$ are defined recursively in the following way.  We choose $\xi_0\in E$ as an arbitrary point. When $\{\xi_k\}_{k=0}^n$ are selected, we choose the next point $\xi_{n+1}\in E$ as a point satisfying
\[
\sum_{k=0}^{n} |\xi_{n+1}-\xi_k|^{2-d} = \min_{x\in E} \sum_{k=0}^{n} |x-\xi_k|^{2-d}.
\]
It is known that Leja points are equidistributed in $E$. Theorem \ref{thm2.2} provides new quantitative information about discrete potentials of Fekete and Leja points for non-smooth sets.

\begin{corollary} \label{cor2.3}
If $E\subset\R^d,\ d\ge 3,$ is a compact set satisfying \eqref{2.3}, then \eqref{2.5} and \eqref{2.6} hold true for the Fekete and Leja points of $E$.
\end{corollary}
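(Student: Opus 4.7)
The plan is to verify hypothesis \eqref{2.4} of Theorem \ref{thm2.2} for both configurations; in fact I expect to establish the stronger bound
\[
\hat{I}[\tau(X_n)] \le W(E), \qquad n \ge 2,
\]
for $X_n=\cF_n$ and for $X_n$ equal to the first $n$ Leja points $L_n:=\{\xi_0,\ldots,\xi_{n-1}\}$. Such a bound makes \eqref{2.4} trivially true with any $C_1\ge 0$, and then \eqref{2.5} and \eqref{2.6} follow immediately from Theorem \ref{thm2.2}, whose remaining hypotheses ($W(E)<\infty$ and the H\"older condition \eqref{2.3}) are exactly those assumed in the corollary.

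For Fekete points I would use the classical monotonicity argument. Set $\epsilon_n:=\hat{I}[\tau(\cF_n)]$; deleting each point of $\cF_n$ in turn yields $n$ sub-configurations of size $n-1$, and a direct double-count shows that the average of their discrete energies equals $\epsilon_n$, because each pair $(i,j)$ is retained in exactly $n-2$ of these subsets. Hence at least one sub-configuration has discrete energy $\le\epsilon_n$, and minimality of the Fekete energy forces $\epsilon_{n-1}\le\epsilon_n$. Combined with the well-known convergence $\epsilon_n\to W(E)$ recalled in Section~1, this monotonicity implies $\epsilon_n\le W(E)$ for every $n\ge 2$.

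For Leja points I would exploit the recursion together with Frostman's maximum principle $U^{\mu_E}\le W(E)$ on $\R^d$ (an immediate consequence of $g_E\ge 0$). For each $k\ge 1$ the Leja property yields
\[
\sum_{j=0}^{k-1}|\xi_k-\xi_j|^{2-d} = \min_{x\in E}\sum_{j=0}^{k-1}|x-\xi_j|^{2-d} \le \int_E\sum_{j=0}^{k-1}|x-\xi_j|^{2-d}\,d\mu_E(x) = \sum_{j=0}^{k-1}U^{\mu_E}(\xi_j) \le k\,W(E),
\]
where the first inequality uses that $\min_E\le\int_E d\mu_E$ for any probability measure $\mu_E$ on $E$. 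Summing over $k=1,\ldots,n-1$ and dividing by $n(n-1)/2$ gives $\hat{I}[\tau(L_n)]\le W(E)$, and Theorem \ref{thm2.2} applied to $L_n$ produces \eqref{2.5} and \eqref{2.6}. I do not foresee any real obstacle, as both energy bounds are elementary and all of the analytic work has already been done inside Theorem \ref{thm2.2}.
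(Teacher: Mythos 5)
Your proposal is correct and follows essentially the same route as the paper: both establish the stronger bound $\hat{I}[\tau(X_n)]\le W(E)$ for Fekete and Leja configurations and then invoke Theorem \ref{thm2.2}, with the Leja case resting on the same chain $\min_{x\in E}U^{\nu}(x)\le \int U^{\nu}\,d\mu_E=\int U^{\mu_E}\,d\nu\le W(E)$ that the paper cites from Landkof. The only difference is that you supply proofs of the two standard facts the paper quotes (monotonicity of Fekete energies via the deletion/averaging argument, and the maximum principle for potentials of unit measures), which is a harmless elaboration.
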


Surveys of results on Fekete points may be found in Korevaar \cite{Ko}, Andrievskii and Blatt \cite{AB} and Korevaar and Monterie \cite{KM}. We note that the estimates of Theorem \ref{thm2.2} can be improved for the Fekete points of a set $E$ satisfying more restrictive smoothness conditions. Results on Leja points may be found in G\"otz \cite{Go3}.

\section{Proofs}

We briefly review some well known facts from Riesz potential theory for $0<\alpha\le 2,$ see \cite{La}. If $U_{\alpha}^{\mu_E}(x)$ is the equilibrium (conductor) potential for $E$, then \cite[p. 137]{La}
\begin{align} \label{5.1}
0 < U_{\alpha}^{\mu_E}(x) \le W_{\alpha}(E),\ x\in\R^d, \quad \mbox{and} \quad U_{\alpha}^{\mu_E}(x) = W_{\alpha}(E) \mbox{ q.e. on }E.
\end{align}
The second statement means that equality holds quasi everywhere on $E$, i.e., except for a subset of zero capacity in $E$. Thus the function $g_E(x)=W_{\alpha}(E) - U_{\alpha}^{\mu_E}(x)$ satisfies
\begin{align} \label{5.2}
0 \le g_E(x) \le W_{\alpha}(E),\quad x\in\R^d\cup\{\infty\}.
\end{align}
If $0<\alpha<2$ then subharmonicity of $g_E$ and strict convexity of the kernel $k_{\alpha}$ imply that $g_E(x)>0$ for $x\in E^c.$ The Newtonian case $\alpha=2$ is special. Let $\Omega_E$ be the unbounded connected component of $E^c=(\R^d\cup\infty)\setminus E$. For $\alpha=2,$ the equilibrium measure $\mu_E$ is supported on $\partial\Omega_E$. As a result, $g_E$ is subharmonic in $\R^d$ and harmonic in $\R^d\setminus\partial\Omega_E$. Furthermore, $g_E$ is strictly positive on $\Omega_E$, and is identically zero on $\R^d\setminus\overline\Omega_E$ (hence also zero on bounded components of $E^c$). Note that $g_E(x)$ coincides with the Green function of $\Omega_E$ for $x\in\Omega_E$.

\begin{proof}[Proof of Theorem \ref{thm1.1}]
Set $\tau_n:=\tau(X_n)$ for brevity. We first prove that \eqref{1.1} and \eqref{1.2} imply \eqref{1.3}. Observe that each closed set $F\subset\Omega_E$ contains $o(n)$ points of $X_n$ as $n\to\infty,$ i.e.
\begin{align} \label{5.3}
\lim_{n\to\infty} \tau_n(F) = 0.
\end{align}
This fact follows from \eqref{1.2} because $\min_{x\in F} g_E(x) > 0$ and
\[
0 \le \tau_n(F) \min_{x\in F} g_E(x) \le  \frac{1}{n} \sum_{x_{k,n}\in F} g_E(x_{k,n}) \le m_E(X_n) \to 0 \quad\mbox{as } n\to\infty.
\]
The same argument implies for $0<\alpha<2$ that \eqref{5.3} holds for any closed set $F\subset E^c.$ Thus if $R>0$ is sufficiently large, so that $E\subset B_R:=\{x:|x|<R\},$ we have $o(n)$ points of $X_n$ in $\R^d\setminus B_R.$ Consider
\[
\hat\tau_n := \frac{1}{n} \sum_{|x_{k,n}|<R} \delta_{x_{k,n}}= \tau_n\vert_{B_R}.
\]
Since $\supp(\hat\tau_n)\subset B_R,\ n\in\N,$ we use Helly's theorem to select a weakly convergent subsequence from the sequence $\hat\tau_n$. Preserving the same notation for this subsequence, we assume that $\hat\tau_n \stackrel{*}{\rightarrow} \tau$ as $n\to\infty$. It is also clear from \eqref{5.3} that $\tau_n \stackrel{*}{\rightarrow} \tau$  as $n\to\infty$. Furthermore, $\tau$ is a probability measure supported on the compact set $\hat E := \R^d\setminus\Omega_E$ for $\alpha=2$, and on $E$ for  $0<\alpha<2.$ Suppose that $R>0$ is large, and order $x_{k,n}$ as follows
\[
|x_{1,n}| \le |x_{2,n}| \le \ldots \le |x_{m_n,n}| < R \le |x_{m_n+1,n}| \le \ldots \le |x_{n,n}|.
\]
Then
\begin{align} \label{5.4}
\hat{I}_{\alpha}[\tau_n]  &= \hat{I}_{\alpha}[\hat \tau_n] + \frac{2}{n(n-1)} \sum_{1\le j<k \atop m_n<k\le n} k_{\alpha}(x_{j,n}-x_{k,n}) \ge \hat{I}_{\alpha}[\hat \tau_n],
\end{align}
where we used that $k_{\alpha}(x)>0$ for all $x\in\R^d.$ Thus we obtain from \eqref{5.4} and \eqref{1.1} that
\begin{align} \label{5.5}
\limsup_{n\to\infty} \hat{I}_{\alpha}[\hat \tau_n] \le \limsup_{n\to\infty} \hat{I}_{\alpha}[\tau_n] = W_{\alpha}(E).
\end{align}

We now follow a standard potential theoretic argument to show that $\tau=\mu_E.$ Let $K_M(x,y) := \min\left(k_{\alpha}(x-y),M\right).$ It is clear that $K_M(x,y)$ is a continuous function in $x$ and $y$, and that $K_M(x,y)$ increases to
$k_{\alpha}(x-y)$ as $M\to\infty.$ Using the Monotone Convergence Theorem and the weak* convergence of $\hat\tau_n\times\hat\tau_n$ to $\tau\times\tau,$ we obtain for the energy of $\tau$ that
\begin{align*}
I_{\alpha}[\tau] &= \iint k_{\alpha}(x-y)\,d\tau(x)\,d\tau(y) =
\lim_{M\to\infty} \left( \lim_{n\to\infty} \iint K_M(x,y)\,
d\hat\tau_n(x)\,d\hat\tau_n(y) \right) \\ &\le \lim_{M\to\infty} \left(
\lim_{n\to\infty} \left( \frac{2}{n^2} \sum_{1\le j<k\le m_n} K_M(x_{j,n},x_{k,n}) + \frac{M}{n} \right) \right) \\ &\le
\lim_{M\to\infty} \left( \liminf_{n\to\infty} \frac{2}{n^2}
\sum_{1\le j<k\le m_n} k_{\alpha}(x_{j,n}-x_{k,n}) \right) \\ &= \liminf_{n\to\infty} \frac{m_n(m_n-1)}{n^2}
\hat{I}_{\alpha}[\hat \tau_n] \le W_{\alpha}(E),
\end{align*}
where we applied \eqref{5.5} and $\lim_{n\to\infty} m_n/n = 1$ in the last estimate. Recall that $\supp(\tau) \subset E$ for $0<\alpha<2$ by \eqref{5.3}. Since $I_{\alpha}[\nu]>W_{\alpha}(E)$ for any probability measure $\nu\neq\mu_{E},\ \supp(\nu)\subset E$, we obtain that $\tau=\mu_E$ and \eqref{1.3} follows for $0<\alpha<2$. In the case $\alpha=2,$ we have that $\supp(\tau) \subset \hat E = \R^d\setminus\Omega_E,$ where $W_{\alpha}(\hat E)=W_{\alpha}(E)$ and $\mu_{\hat E}=\mu_E$ by \cite[p. 164]{La}. Since again $I_{\alpha}[\nu]> W_{\alpha}(\hat E)$ for any probability measure $\nu\neq\mu_{\hat E},\ \supp(\nu)\subset \hat E$, we conclude that $\tau=\mu_{\hat E}=\mu_E$ as before.

Let us turn to the converse statement \eqref{1.3} $\Rightarrow$ \eqref{1.2}. Note that $g_E(x)\le W_{\alpha}(E)$ for all $x\in\R^d,$ cf. \eqref{5.2}. Choosing $R>0$ so large that $E\subset B_R$, we obtain from \eqref{1.3} that
\[
\frac{1}{n} \sum_{|x_{k,n}|\ge R} g_E(x_{k,n}) \le \frac{o(n)}{n}\,W_{\alpha}(E),
\]
which implies that
\begin{align} \label{5.6}
\limsup_{n\to\infty} \frac{1}{n} \sum_{|x_{k,n}|\ge R} g_E(x_{k,n}) \le 0.
\end{align}
Since $g_E(x)$ is upper semi-continuous  in $\R^d,$ we obtain from \eqref{1.3} and Lemma 0.1 of \cite[p. 8]{La} that
\begin{align} \label{5.7}
\limsup_{n\to\infty} \frac{1}{n} \sum_{|x_{k,n}| < R} g_E(x_{k,n}) &= \limsup_{n\to\infty} \int_{B_R} g_E(x)\,d\tau_n(x) \le \int_{B_R} g_E(x)\,d\mu_E(x) \\ \nonumber &= W_{\alpha}(E) - \int U_{\alpha}^{\mu_E}(x)\,d\mu_E(x) = W_{\alpha}(E) - I_{\alpha}[\mu_E] = 0.
\end{align}
Observe from the definition of $m_E(X_n)$ and \eqref{5.6}-\eqref{5.7} that
\begin{align*}
0 \le \liminf_{n\to\infty} m_E(X_n)\le \limsup_{n\to\infty} m_E(X_n) \le \limsup_{n\to\infty} \frac{1}{n} \sum_{k=1}^n g_E(x_{k,n}) \le 0,
\end{align*}
so that \eqref{1.2} follows.
\end{proof}

\begin{proof}[Proof of Theorem \ref{thm2.1}]
Given $r>0$, define the measures $\nu_k^r$ with $d\nu_k^r(x_{k,n} + ry) = dS(y)/\omega_d,\ y\in S,$ where $dS$ denotes the surface area measure on the unit hypersphere $S$ in $\R^d$. Let $\tau_n:=\tau(X_n)$ and
\[
\tau_n^r:=\frac{1}{n}\sum_{k=1}^n \nu_k^r,
\]
and estimate
\begin{align} \label{5.9}
\left|\int\phi\,d\tau_n - \int\phi\,d\tau_n^r\right| \le \frac{1}{n}\sum_{k=1}^n \frac{1}{\omega_d}\int_S \left|\phi(x_{k,n}) - \phi(x_{k,n} + ry)\right|\,dS(y) \le \omega(\phi;r).
\end{align}
We now assume that $E$ is a regular set bounded by finitely many piecewise smooth $(d-1)$-dimensional surfaces, and remove this assumption in the end of proof. Since $E$ is regular, we have that  $g_E(x) = 0,\ x\in\R^d\setminus\Omega_E.$ Consider the signed measure $\lambda:=\tau_n^r-\mu_E,\ \lambda(\R^d)=0.$ This measure is recovered from its potential by the formula
\[
d\lambda=-\frac{1}{(d-2)\omega_d}\left(\frac{\partial U^{\lambda}}{\partial n_+} + \frac{\partial U^{\lambda}}{\partial n_-}\right) dS,
\]
where $dS$ is the surface area on $\supp(\lambda) = \supp(\mu_E) \cup \left( \cup_{k=1}^n \{x:|x-x_{k,n}|=r\}\right)$, and $n_{\pm}$ are the inner and the outer normals, see \cite[p. 164]{Ke} and \cite[pp. 164--165]{La}. Let $B_R:=\{x:|x|<R\}$ be a ball containing the support of $\phi.$ We use Green's identity
\[
\int_G u \Delta v\,dV =  \int_{\partial G} u\,\frac{\partial v}{\partial n}\,dS - \int_G \nabla u \cdot \nabla v\,dV
\]
with $u=\phi$ and $v=U^{\lambda}$ in each connected component $G$ of $B_R\setminus\supp(\lambda).$ Since $U^{\lambda}$ is harmonic in $G$, we have that $\Delta U^{\lambda}=0$ in $G$. Adding Green's identities for all domains $G$, we obtain that
\begin{align} \label{5.10}
\left|\int\phi\,d\lambda\right| = \frac{1}{(d-2)\omega_d} \left| \int_{B_R} \nabla \phi \cdot \nabla U^{\lambda} \,dV \right| \le \frac{1}{(d-2)\omega_d} \sqrt{D[\phi]}\,\sqrt{D[U^{\lambda}]},
\end{align}
by the Cauchy-Schwarz inequality. It is known that $D[U^{\lambda}] = (d-2)\omega_d  I[\lambda]$ \cite[Thm 1.20]{La}, where $I[\lambda]=\iint |x-y|^{2-d}\,d\lambda(x)\,d\lambda(y) = \int U^{\lambda}\,d\lambda$ is the energy of $\lambda$. We also recall that $\int U^{\mu_E}\,d\mu_E = I[\mu_E] = W(E)$, which gives that
\[
I[\lambda]=\int U^{\tau_n^r}\,d\tau_n^r - 2\int U^{\mu_E}\,d\tau_n^r + W(E).
\]
Since $g_E(x)$ is harmonic in $\Omega_E$, the mean value property implies that
\begin{align*}
-\int U^{\mu_E}\,d\tau_n^r &= \int \left( g_E(x) - W(E) \right)\, d\tau_n^r(x) \\ &= \frac{1}{n} \left(\sum_{d_E(x_{k,n})\le r} \int g_E\,d\nu_k^r + \sum_{d_E(x_{k,n})>r} \int g_E\,d\nu_k^r \right) - W(E) \\ &\le \frac{1}{n} \left(\sum_{d_E(x_{k,n})\le r} \max_{d_E(x)\le 2r} g_E(x) + \sum_{d_E(x_{k,n})>r} g_E(x_{k,n})\right) - W(E) \\ &\le\max_{d_E(x)\le 2r} g_E(x) + m_E(X_n) - W(E).
\end{align*}
Taking into account the representation \cite[p. 165]{La}
\[
U^{\nu_k^r}(x)=(\max(r,|x-x_{k,n}|))^{2-d},\quad x\in\R^d,
\]
we proceed further with
\begin{align*}
\int U^{\tau_n^r}\,d\tau_n^r &= \frac{1}{n^2} \sum_{j,k=1}^n \int U^{\nu_k^r}\,d\nu_j^r \le \frac{1}{n^2} \left(\sum_{j\neq k} |x_{j,n}-x_{k,n}|^{2-d} + n r^{2-d}\right) \\ &= \frac{n-1}{n} \hat{I}[\tau_n] + \frac{r^{2-d}}{n},
\end{align*}
and combine the energy estimates to obtain
\[
I[\lambda] \le 2m_E(X_n) + \frac{n-1}{n} \hat{I}[\tau_n] - W(E) + \frac{r^{2-d}}{n} + 2\max_{d_E(x)\le 2r} g_E(x).
\]
Using \eqref{5.9}, \eqref{5.10} and the above estimate, we deduce \eqref{2.1}-\eqref{2.2} by the following argument:
\begin{align*}
\left|\int\phi\,d\tau_n - \int\phi\,d\mu_E\right| &\le \left|\int\phi\,d\tau_n - \int\phi\,d\tau_n^r\right| + \left|\int\phi\,d\tau_n^r - \int\phi\,d\mu_E\right| \\ &\le \omega(\phi;r) + \frac{\sqrt{D[\phi]}\sqrt{D[U^{\lambda}]}}{(d-2)\omega_d} = \omega(\phi;r) + \sqrt{\frac{D[\phi]}{(d-2)\omega_d}}\,\sqrt{I[\lambda]}.
\end{align*}
Thus we proved the result for regular sets bounded by finitely many piecewise smooth surfaces. To show that \eqref{2.1}-\eqref{2.2} hold for an arbitrary compact set $E$  of positive capacity, we approximate $E$ by a decreasing sequence $E_m,\ m\in\N,$  of compact sets with piecewise smooth boundaries. Let $\eps_1=1$ and consider an open cover of $E$ by the balls $\{B(x,\eps_1)\}_{x\in E},$ where $B(x,\eps_1)$ is centered at $x$ and has radius $\eps_1.$ There exists a finite subcover such that
$E\subset\cup_{k=1}^{N_1} B(c_{k,1},\eps_1).$ Define $E_1:=\cup_{k=1}^{N_1} \overline B(c_{k,1},\eps_1).$ We construct the sets $E_m$ inductively for $m\ge 2.$ Set $\eps_m:={\rm dist}(E,\partial E_{m-1})/2 >0.$ As before, we have a finite subcover such that
\[
E\subset\bigcup_{k=1}^{N_m} B(c_{k,m},\eps_m),\quad m\in\N,
\]
where $c_{k,m}\in E,\ k=1,\ldots,N_m.$ Let
\[
E_m := \bigcup_{k=1}^{N_m} \overline{B(c_{k,m},\eps_m)},\quad m\in\N,
\]
and note that $E_m\subset E_{m-1}$ and $\eps_m\le\eps_{m-1}/2,\ m\ge 2.$ Clearly, the boundary of every $E_m$ consists of finitely many piecewise smooth surfaces, and each surface is composed of finitely many spherical fragments. Thus every $E_m$ is regular by Theorem 6.6.15 of \cite[p. 185]{AG}, and \eqref{2.1}-\eqref{2.2} hold for every $E_m,\ m\in\N.$ Observe that $\lim_{m\to\infty} \eps_m = 0,$ so that
\[
E=\bigcap_{m=1}^{\infty} E_m.
\]
If $g_{E_m}(x)$ is the Green function for $\R^d\setminus E_m$, then
\[
g_{E_m}(x)\le g_{E_{m+1}}(x)\le g_E(x),\quad x\in\R^d,
\]
for any $m\in\N,$ by the Maximum Principle. This gives that
\[
\max_{d_{E_m}(x)\le 2r} g_{E_m}(x) \le \max_{d_{E_m}(x)\le 2r} g_E(x),\quad m\in\N.
\]
Since $g_E(x)$ is subharmonic in $\R^d$ and harmonic in $\Omega_E$, the maximum on the right of the above inequality is attained on the set $\{x\in\R^d:d_{E_m}(x)=2r\}\subset\Omega_E.$  We have that
\[
\lim_{m\to\infty} \max_{d_{E_m}(x)=2r} g_E(x) = \max_{d_E(x)=2r} g_E(x),
\]
because $d_{E_m}(x) \le d_E(x) \le d_{E_m}(x) + \eps_m,\ x\in\R^d,$ by the triangle inequality. Thus
\begin{align} \label{5.11}
\limsup_{m\to\infty} \max_{d_{E_m}(x)\le 2r} g_{E_m}(x) \le \max_{d_E(x)\le 2r} g_E(x).
\end{align}
Furthermore, Harnack's Theorem implies that
\[
\lim_{m\to\infty} g_{E_m}(x) = g_E(x),\quad x\in\Omega_E,
\]
so that
\begin{align} \label{5.12}
\lim_{m\to\infty} m_{E_m}(X_n) = m_E(X_n).
\end{align}
Using Helley's selection theorem, we assume that $\mu_{E_m} \stackrel{*}{\rightarrow} \mu$ as $m\to\infty$ along a subsequence $N\subset\N.$ Then we have that
\[
I[\mu] \le \liminf_{m\in N} I[\mu_{E_m}]
\]
by \cite[p. 78]{La}. On the other hand, it is known \cite[pp. 140--141]{La} that
\begin{align} \label{5.13}
\lim_{m\to\infty} W(E_m) = W(E),
\end{align}
which gives that
\[
I[\mu] \le \liminf_{m\in N} W(E_m) = W(E).
\]
Note that $\mu$ is a unit measure supported on $\partial\Omega_E\subset E$ by our construction. Since $I[\mu] \le W(E)$, we conclude that $\mu=\mu_E$ by uniqueness of the equilibrium measure minimizing the energy functional.
This argument holds for any subsequence $N$, which means that
$\mu_{E_m} \stackrel{*}{\rightarrow} \mu_E$ as $m\to\infty.$ Consequently,
\[
\lim_{m\to\infty} \int \phi\,d\mu_{E_m} = \int \phi\,d\mu_E.
\]
We now pass to the limit in \eqref{2.1} stated for $E_m$, as $m\to\infty$, and use the above equation together with \eqref{5.11}, \eqref{5.12} and \eqref{5.13} to prove that \eqref{2.1}-\eqref{2.2} also hold for $E$.

\end{proof}

\begin{proof}[Proof of Theorem \ref{thm2.2}] One readily finds from the triangle inequality that
\[
\left| |x-t_1| - |x-t_2| \right| \le |t_1-t_2|, \quad x,t_1,t_2\in\R^d,
\]
and
\[
\left| d_E(t_1) - d_E(t_2) \right| \le |t_1-t_2|, \quad t_1,t_2\in\R^d.
\]
Given a fixed point $y\in E^c$, we have
\begin{align} \label{5.14}
d_E(y) \le |y-x| + d_E(x),\quad x\in\R^d,\ y\in E^c.
\end{align}
Let $\diam(E):=\max_{t,w\in E} |t-w|$ be the diameter of $E$, and set $R:=\diam(E)+d_E(y)+1.$ We apply Theorem \ref{thm2.1} with the function
\begin{align} \label{5.15}
\phi(x):=\max\left((|y-x|+d_E(x))^{2-d}-R^{2-d},0\right),\quad x\in\R^d,\ y\in E^c.
\end{align}
It is clear that $\supp(\phi)\subset B(y,R):=\{x\in\R^d:|x-y|<R\}$.
Furthermore, $E\subset\supp(\phi)$ because
\[
|y-x| \le d_E(y) + \diam(E) < R,\quad x\in E,\ y\in E^c,
\]
by the triangle inequality. Since $d_E$ is Lipschitz continuous, the function $f(x):=|y-x|+d_E(x),\ x\in\R^d,$ satisfies the Lipschitz condition
\[
\left| f(t_1) - f(t_2) \right| \le 2 |t_1-t_2|, \quad t_1,t_2\in\R^d.
\]
Thus all first order partial derivatives of $f$ exist a.e. with respect to the volume measure, and we obtain that
\[
\left|\frac{\partial f}{\partial x_i}(x)\right| \le 2, \quad i=1,\ldots,d,\quad \mbox{for a.e. } x=(x_1,\ldots,x_d)\in\R^d.
\]
It follows that $\phi$ is Lipschitz continuous and that $\partial\phi/\partial x_i$ also exist a.e. in the same sense as above, with
\[
\left|\frac{\partial \phi}{\partial x_i}(x)\right| \le \frac{2(d-2)}{(|y-x|+d_E(x))^{d-1}} \le \frac{2(d-2)}{(d_E(y))^{d-1}}, \quad i=1,\ldots,d,
\]
for a.e. $x=(x_1,\ldots,x_d)\in\R^d$ by \eqref{5.14}. This gives the estimates
\[
|\phi(t_1)-\phi(t_2)|\le |t_1-t_2|\, \sup_{x\in\R^d} |\nabla\phi(x)| \le \frac{2(d-2)\sqrt{d}}{(d_E(y))^{d-1}}\, |t_1-t_2|,\quad t_1,t_2\in\R^d,
\]
and
\begin{align} \label{5.16}
\omega(\phi;r) \le \frac{2(d-2)\sqrt{d}}{(d_E(y))^{d-1}}\, r.
\end{align}
Furthermore, we obtain for the Dirichlet integral
\begin{align*}
D[\phi] &= \int_{\R^d} |\nabla\phi|^2 \,dV \le \int_{B(y,R)} \frac{4d(d-2)^2\,dV(x)}{(|y-x|+d_E(x))^{2(d-1)}} \\ &\le \int_{B(y,d_E(y))} \frac{4d(d-2)^2\,dV(x)}{(|y-x|+d_E(x))^{2(d-1)}} + \int_{d_E(y)\le |y-x|\le R} \frac{4d(d-2)^2\,dV(x)}{(|y-x|+d_E(x))^{2(d-1)}} \\ &\le O\left( d_E(y)^{2-d}\right) + O\left(\int_{d_E(y)}^R \frac{r^{d-1}\,dr}{r^{2(d-1)}}\right) = O\left(d_E(y)^{2-d}\right)
\end{align*}
by $\supp(\phi)\subset B(y,R)$ and \eqref{5.14}. We now let $r=n^{-p/s},$ and obtain that $\omega(\phi;r)=O((d_E(y))^{1-d} n^{-p/s})$ by \eqref{5.16}. Since the Green function $g_E$ satisfies the H\"older condition \eqref{2.3},  we have that
\[
\max_{d_E(x)\le 2r} g_E(x) \le O(n^{-p}).
\]
Applying the above estimates and \eqref{2.4} in \eqref{2.1}-\eqref{2.2}, we arrive at
\begin{align} \label{5.17}
\left|\int\phi\,d\mu_E - \frac{1}{n} \sum_{k=1}^n \phi(x_{k,n})\right| &\le O\left(d_E(y)^{1-d} n^{-p/s}\right) + O\left(d_E(y)^{2-d}\right)^{1/2} \left( O\left(n^{-p}\right) \right)^{1/2} \\ \nonumber &\le O\left(d_E(y)^{1-d} n^{-p/s} + d_E(y)^{1-d/2} n^{-p/2} \right)\quad \mbox{as }n\to\infty,
\end{align}
where we also used that $m_E(X_n)=0$. Note that all constants in $O$ terms are independent of the point $y\in E^c$, of the set $X_n$, as well as of $n\ge 2.$ It remains to observe that $\phi(x)=|y-x|^{2-d} - R^{2-d}$ for $x\in E$, so that
\begin{align*}
\int\phi\,d\mu_E - \frac{1}{n} \sum_{k=1}^n \phi(x_{k,n}) &= U^{\mu_E}(y) - U^{\tau(X_n)}(y),\quad y\in E^c.
\end{align*}
Thus \eqref{2.5} follows from \eqref{5.17}.

Let $q>0$. If $y\in\Gamma:=\{x\in\Omega_E: g_E(x)=n^{-q}\}$ then $d_E(y) \ge (n^{-q}/A(E))^{1/s}$ by \eqref{2.3}, and we obtain from \eqref{2.5} that
\begin{align} \label{5.18}
U^{\mu_E}(y) - U^{\tau(X_n)}(y) \le O\left(n^{q(d-1)/s-p/s}\right) + O\left(n^{q(d-2)/(2s)-p/2}\right),\quad y\in\Gamma.
\end{align}
Recall that $U^{\mu_E}(y)=W(E)-g_E(y)=W(E)-n^{-q}$ for $y\in\Gamma.$ Hence
\[
U^{\tau(X_n)}(y) \ge W(E)-n^{-q}-O\left(n^{q(d-1)/s-p/s}\right)- O\left(n^{q(d-2)/(2s)-p/2}\right),\quad y\in\Gamma.
\]
We can now choose $q=q(d,s)>0$ so small that
\[
U^{\tau(X_n)}(y) \ge W(E) - O\left(n^{-q}\right),\quad y\in\Gamma.
\]
Observe that the open set $G:=\{x\in\R^d: g_E(x)<n^{-q}\}$ contains $E$ strictly inside. Since $U^{\tau(X_n)}$ is superharmonic in $\R^d$, it attains minimum over $G$ on its boundary $\Gamma$. It follows that
\[
U^{\tau(X_n)}(x) \ge \inf_{y\in\Gamma} U^{\tau(X_n)}(y) \ge W(E) - O\left(n^{-q}\right),\quad x\in E.
\]
Since $U^{\mu_E}(x)=W(E),\ x\in E,$ we obtain that
\[
U^{\mu_E}(x) - U^{\tau(X_n)}(x) = O\left(n^{-q}\right),\quad x\in E.
\]
The Principle of Domination \cite[p. 110]{La} implies that the above equation holds for all $x\in\R^d$, because $\supp(\mu_E)\subset E.$
\end{proof}

\begin{proof}[Proof of Corollary \ref{cor2.3}]
We first observe that the Fekete points $\cF_n$ satisfy
\begin{align} \label{5.19}
\hat I[\tau(\cF_n)] \le W(E), \quad n\ge 2,
\end{align}
This fact holds because the discrete energies of Fekete sets increase to $W(E)$ with $n,$ see \cite[p. 160]{La}. Hence \eqref{2.4} holds true and Theorem \ref{2.2} applies to $\cF_n$.

It turns out that \eqref{5.19} is also true for the Leja points $\mathcal L_n=\{\xi_k\}_{k=0}^{n-1},\ n\in\N.$ Consider the corresponding potentials $U^{\tau(\mathcal L_n)}$, and recall that
\[
\min_{x\in E} U^{\tau(\mathcal L_n)}(x) = U^{\tau(\mathcal L_n)}(\xi_n) = \frac{1}{n} \sum_{k=0}^{n-1} |\xi_n-\xi_k|^{2-d}
\]
by definition. Hence we have for the discrete energy
\begin{align*}
\hat I[\tau(\mathcal L_n)] &= \frac{2}{n(n-1)} \sum_{0\le j<k\le n-1} |\xi_j-\xi_k|^{2-d} = \frac{2}{n(n-1)} \sum_{k=1}^{n-1} k\,U^{\tau(\mathcal L_k)}(\xi_k) \\ &= \frac{2}{n(n-1)} \sum_{k=1}^{n-1} k\,\min_{x\in E} U^{\tau(\mathcal L_k)}(x).
\end{align*}
Since the inequality
\[
\min_{x\in E} U^{\nu}(x) \le W(E)
\]
holds for the potential of any positive unit measure $\nu$, see Theorem 2.3 of \cite[p. 138]{La}, we obtain that
\[
\hat{I}[\tau(\mathcal L_n)] \le W(E).
\]
\end{proof}

\end{document}